\documentclass[11pt,a4paper]{article}

\usepackage[T1]{fontenc}
\usepackage[utf8]{inputenc}
\usepackage{lmodern}
\usepackage[margin=1in]{geometry}
\usepackage{amsmath,amssymb,amsthm,mathtools}
\usepackage{xurl}
\usepackage{hyperref}
\usepackage{booktabs}
\usepackage{enumitem}
\usepackage{microtype}
\usepackage[numbers]{natbib}

\newtheorem{theorem}{Theorem}[section]
\newtheorem{proposition}[theorem]{Proposition}

\newtheorem{corollary}[theorem]{Corollary}

\theoremstyle{definition}
\newtheorem{definition}[theorem]{Definition}
\newtheorem{remark}[theorem]{Remark}

\newcommand{\FF}{\mathbb{F}}
\newcommand{\HH}{\mathbb{H}}
\newcommand{\NN}{\mathbb{N}}
\newcommand{\ZZ}{\mathbb{Z}}
\newcommand{\cchar}{\mathrm{C\text{-}char}}
\newcommand{\Char}{\mathrm{char}}

\title{Finite-field Krasner quotients:\\
isomorphism thresholds, characteristics, and censuses}
\author{
  Alessandro Linzi\\
  \small \texttt{alessandro.linzi.phd@icloud.com}\\[0.4em]
  \small\itshape Latest affiliation:
  Center for Information Technologies and Applied Mathematics,\\
  \small\itshape University of Nova Gorica, Slovenia
}
\date{\today}

\begin{document}
\maketitle

\begin{abstract}
We study Krasner quotient hyperfields arising from finite fields,
$\FF_q/G_r$, where $G_r\le \FF_q^\times$ has index $r$.
Building on the structure theorem of Baker--Jin, we determine the
\emph{characteristic} and \emph{C-characteristic} of all sufficiently large
such quotients: they depend only on the parity of $r$ and, when $r$ is even,
on the residue class of $q$ modulo $2r$.
In particular, the two Baker--Jin stable classes for even $r$ are separated
by characteristic $2$ versus $3$, while the C-characteristic is always $1$.

We complement this structural result with a computational laboratory:
sharp Weil thresholds for Baker--Jin large-$q$ isomorphism, empirical minimal
stabilization bounds $N_r^{\mathrm{emp}}$, complete finite-field quotient
atlases for hyperfield orders $n\le 7$, and comparisons with the enumerations
of Ameri--Eyvazi--Ho\v{s}kov\'a-Mayerov\'a (orders $\le 6$) and
Massouros--Massouros (order $7$).
Among other findings, exactly $15$ isomorphism types of order~$7$ arise as
finite-field quotients, out of $277$ hyperfields of that order---a concrete
data point toward the Baker--Jin rarity conjecture for quotients.
All algorithms and tables are available in an open-source package suitable
for independent verification and arXiv ancillary material.
\end{abstract}

\tableofcontents

%----------------------------------------------------------------------
\section{Introduction}
%----------------------------------------------------------------------

Krasner hyperfields~\cite{Krasner1957,Krasner1983} generalize fields by allowing
addition to be multivalued.
A fundamental source of examples is the \emph{quotient construction}: if $K$ is a
field and $G\le K^\times$, the set of cosets $K/G:=(K^\times/G)\cup\{0\}$ carries a
natural hyperfield structure.
Not every hyperfield arises this way~\cite{Massouros1985}; understanding which
ones do, and how finite-field quotients are organized up to isomorphism, remains
a central theme~\cite{BakerJin2021,Ameri2020,Massouros2025,KedzierskiLinziStojalowska2023}.

Baker and Jin~\cite{BakerJin2021} proved that, for fixed index $r\ge 2$ and all
sufficiently large prime powers $q\equiv 1\pmod r$, the quotient
$\FF_q/G_r$ falls into at most two isomorphism classes $\HH_r$ and $\HH_r'$,
distinguished when $r$ is even by the congruence class of $q$ modulo $2r$.
They left open the true growth of the threshold $N_r$, the characteristics of
the stable classes, and the asymptotic rarity of quotients among all hyperfields.

Independently, characteristic and C-characteristic of hyperfields were developed
in~\cite{KedzierskiLinziStojalowska2023} as additive invariants generalizing the
usual characteristic of fields, with applications to non-quotientability criteria.

\paragraph{Contributions.}
\begin{enumerate}[leftmargin=*,itemsep=2pt]
\item \textbf{Theorem~\ref{thm:stable-char}.}
  We determine $\Char$ and $\cchar$ of all large finite-field quotients
  $\FF_q/G_r$, as a corollary of the explicit hyperaddition rules in the proof
  of Baker--Jin.
\item \textbf{Empirical thresholds.}
  We compute minimal stabilization bounds $N_r^{\mathrm{emp}}$ for $2\le r\le 8$
  and compare them to the Remark~1.2 Weil bound of~\cite{BakerJin2021}.
\item \textbf{Censuses.}
  We classify all finite-field quotients of order $n\le 7$ up to isomorphism and
  compare $Q_r^{\mathrm{fin}}$ to published totals $H_n$ of all hyperfields.
\item \textbf{Software.}
  An open library implements construction, layered isomorphism tests, invariants,
  and paper-driven experiment suites.
\end{enumerate}

%----------------------------------------------------------------------
\section{Preliminaries}
%----------------------------------------------------------------------

\begin{definition}[Krasner hyperfield]
A \emph{hyperfield} is a tuple $(F,+,\cdot,0,1)$ where $(F\setminus\{0\},\cdot)$ is an
abelian group, $(F,+,0)$ is a canonical hypergroup, multiplication distributes over
hyperaddition, and $0$ is absorbing for multiplication
(cf.~\cite{Krasner1957,KedzierskiLinziStojalowska2023}).
\end{definition}

\begin{definition}[Finite-field quotient]
Let $q$ be a prime power and $r\mid(q-1)$.
Write $G_r$ for the unique subgroup of $\FF_q^\times$ of index $r$ (order $d=(q-1)/r$).
The Krasner quotient $K=\FF_q/G_r$ has underlying set
$(\FF_q^\times/G_r)\cup\{0\}$ of cardinality $r+1$, with
\[
[x]\boxplus[y]
=\bigl\{\,[x+yg]_{G_r}\ \big|\ g\in G_r\,\bigr\}
\cup
\begin{cases}
\{0\} & \text{if }0\in xG_r+yG_r,\\
\emptyset & \text{otherwise (does not occur)}.
\end{cases}
\]
Multiplication of nonzero classes is the usual coset product.
\end{definition}

\begin{definition}[{\cite[Def.~3]{KedzierskiLinziStojalowska2023}}]
\label{def:char}
For a hyperfield $F$, set $1\times_F 1:=\{1\}$ and
$n\times_F 1:=(n-1)\times_F 1\,\boxplus\,1$ inductively.
\begin{itemize}[leftmargin=*]
\item $\Char F:=\min\{n\in\NN: 0\in n\times_F 1\}$, or $\infty$;
\item $\cchar F:=\min\{n\in\NN: 1\in (n+1)\times_F 1\}$, or $\infty$.
\end{itemize}
Always $\cchar F\le \Char F$ when both are finite.
\end{definition}

\begin{theorem}[Baker--Jin~{\cite[Thm.~1.1]{BakerJin2021}}]
\label{thm:BJ}
For each $r\ge 2$ there exists $N_r$ (e.g.\ $N_r=r^4$, or the sharp Weil bound of
their Remark~1.2) such that for all prime powers $q\ge N_r$ with $r\mid(q-1)$:
\begin{enumerate}[label=(\arabic*)]
\item if $r$ is odd, $\FF_q/G_r\cong \HH_r$ (a single class);
\item if $r$ is even, $\FF_q/G_r\cong \HH_r$ when $q\equiv 1\pmod{2r}$ and
$\FF_q/G_r\cong \HH_r'$ when $q\equiv r+1\pmod{2r}$.
\end{enumerate}
Moreover, for such $q$ one has $\HH_r^\times\subseteq [x]\boxplus[y]$ for all
nonzero $x,y$, and the only structural distinction is the location of additive
inverses relative to $[1]$.
\end{theorem}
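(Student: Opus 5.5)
The plan is to reprove the Baker--Jin statement from scratch. Everything reduces to a counting problem on $\FF_q$, which Weil's bound for character sums handles.

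\textbf{Reduction.} Fix nonzero $x,y$ and put $z=y/x$. Then $[x]\boxplus[y]=[x]\cdot([1]\boxplus[z])$, so it suffices to study $[1]\boxplus[z]$. A nonzero class $[w]$ lies in $[1]\boxplus[z]$ exactly when the equation
\[
u+zv=w
\]
has a solution with $u,v\in G_r$. Here $G_r$ is the set of $r$-th powers in $\FF_q^\times$. Substituting $u=s^r$ and $v=t^r$, each solution of the coset equation corresponds to exactly $r^2$ points $(s,t)$ with $st\neq0$ on the Fermat-type curve
\[
s^r+zt^r=w .
\]
This curve is smooth projective of genus $(r-1)(r-2)/2$. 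The Hasse--Weil bound, or equivalently an estimate of the Jacobi sums $\sum_{\chi,\psi}\chi(\cdot)\psi(\cdot)$ over characters of order dividing $r$, gives at least
\[
q+1-(r-1)(r-2)\sqrt q
\]
projective points. We subtract at most $2r$ points at infinity or on the axes $st=0$. The count is then positive once $q\ge N_r$, for an explicit $N_r$ of order $r^4$. This proves $\HH_r^\times\subseteq[x]\boxplus[y]$ for all nonzero $x,y$.

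\textbf{Zero and the isomorphism type.} We have $0\in[1]\boxplus[z]$ iff $-z\in G_r$, i.e.\ iff $[z]=[-1]$. Hence, for such $q$, the whole hyperaddition table is
\[
[x]\boxplus[y]=
\begin{cases}
\HH_r^\times\cup\{0\} & \text{if } [y]=[-x],\\
\HH_r^\times & \text{otherwise.}
\end{cases}
\]
This structure is determined by the cyclic group $\FF_q^\times/G_r\cong\ZZ/r$ together with the element $[-1]$ of order dividing $2$. We identify $\FF_q^\times/G_r$ with $\ZZ/r$ via a generator class $[\gamma]$; then $[-1]=[\gamma^{(q-1)/2}]$ corresponds to $(q-1)/2 \bmod r$. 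For $r$ odd this element must be $0$, so $[-1]=[1]$ and there is a single class $\HH_r$. For $r$ even, $[-1]=[1]$ iff $r\mid (q-1)/2$, i.e.\ $q\equiv1\pmod{2r}$. Otherwise $[-1]$ is the unique element of order $2$, and this happens exactly when $q\equiv r+1\pmod{2r}$. Any group isomorphism between two such quotients preserves the unique involution, so it is a hyperfield isomorphism. This gives the two classes $\HH_r$ and $\HH_r'$, and they differ only in where the inverse of $[1]$ sits.

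\textbf{Main obstacle.} The delicate step is the uniform point-count bound. In particular, we need the lower bound for every $z,w$, including degenerate coefficient configurations, and we must get the sharp constant of Remark~1.2 rather than just some threshold. I would first try the Jacobi-sum expansion, since each nontrivial Jacobi sum has absolute value exactly $\sqrt q$. That gives an error of at most $(r-1)^2\sqrt q$ plus lower-order terms, hence a clean threshold $q>r^4$.
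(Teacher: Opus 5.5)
Your argument is correct and is essentially the Baker--Jin proof, which the paper cites rather than reproves. The Weil bound on the Fermat curve $s^r+zt^r=w$ gives $\HH_r^\times\subseteq[1]\boxplus[z]$, and the hyperaddition table is then fixed by the position of $[-1]$. The degenerate configurations you worry about never occur: the curve is smooth for all $z,w\neq 0$ because the characteristic of $\FF_q$ does not divide $r$.

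One count must be corrected. The projective curve can have up to $3r$ points that give no solution with $st\neq 0$ ($r$ at infinity and $r$ on each axis), not $2r$. For example, with $q=5$, $r=2$, all six points of $s^2+t^2=1$ are of this kind, and indeed $[1]\notin[1]\boxplus[1]$ there. The corrected inequality $q+1-(r-1)(r-2)\sqrt q-3r>0$ reproduces exactly the Remark~1.2 thresholds $6,17,56,171,\dots$ of Table~\ref{tab:nr}, and $q>r^4$ still suffices. In the Jacobi-sum route you get the same inequality by using that the $r-1$ sums $J(\chi,\chi^{-1})$ have modulus $1$, not $\sqrt q$.
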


Baker--Jin give explicit models of $\HH_r$ and $\HH_r'$ (see the proof of their
Theorem~1.1):
\begin{itemize}[leftmargin=*]
\item \textbf{Type $\HH_r$:} $-x=x$ for all $x$; $x\boxplus x = \HH_r$ for $x\neq 0$;
and $x\boxplus y=\HH_r^\times$ for distinct nonzero $x,y$.
\item \textbf{Type $\HH_r'$:} there is a unique element $g'$ of multiplicative order $2$;
$-x=g'x$; $x\boxplus (g'x)=\HH_r'$ for $x\neq 0$; and
$x\boxplus y=(\HH_r')^\times$ whenever $x,y\neq 0$ and $y\neq g'x$.
\end{itemize}

%----------------------------------------------------------------------
\section{Characteristics of the Baker--Jin stable classes}
\label{sec:stable-char}
%----------------------------------------------------------------------

The following result determines the additive invariants of all sufficiently large
finite-field quotients.
It is new as a stated theorem, but its proof is a short consequence of the
explicit models in~\cite{BakerJin2021} together with Definition~\ref{def:char}.

\begin{theorem}[Stable characteristics]
\label{thm:stable-char}
Let $r\ge 2$ and let $q$ be a prime power with $r\mid(q-1)$ and $q\ge N_r$, where
$N_r$ is as in Theorem~\ref{thm:BJ} (e.g.\ the Remark~1.2 bound of~\cite{BakerJin2021}).
Write $K=\FF_q/G_r$.
\begin{enumerate}[label=(\roman*)]
\item If $r$ is odd, then $(\Char K,\cchar K)=(2,1)$.
\item If $r$ is even and $q\equiv 1\pmod{2r}$, then $(\Char K,\cchar K)=(2,1)$.
\item If $r$ is even and $q\equiv r+1\pmod{2r}$, then $(\Char K,\cchar K)=(3,1)$.
\end{enumerate}
In particular, for even $r$ the two stable classes $\HH_r$ and $\HH_r'$ are
separated by characteristic.
\end{theorem}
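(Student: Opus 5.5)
By Theorem~\ref{thm:BJ}, for $q\ge N_r$ the quotient $K$ is isomorphic to $\HH_r$ in cases (i) and (ii) and to $\HH_r'$ in case (iii). Both $\Char$ and $\cchar$ are defined purely from the hyperaddition and the element $1$, so they are isomorphism invariants. It therefore suffices to compute the sets $n\times 1$ for $n=1,2,3$ in the explicit Baker--Jin models listed after Theorem~\ref{thm:BJ}, and read off both invariants. We use that $r\ge 2$, so each model contains at least two nonzero elements.

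\textbf{Type $\HH_r$.} Here $2\times 1=1\boxplus 1=\HH_r$, which contains $0$. Since $1\times 1=\{1\}\not\ni 0$, we get $\Char=2$. For the C-characteristic, $\cchar=1$ requires $1\in 2\times 1$, which holds because $2\times 1=\HH_r\ni 1$. So $(\Char,\cchar)=(2,1)$, covering (i) and (ii).

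\textbf{Type $\HH_r'$.} Here $g'\neq 1$ and $-1=g'$. Since $1\neq g'\cdot 1$, the model gives $2\times 1=1\boxplus 1=(\HH_r')^\times$. This set contains $1$ but not $0$, so $\cchar=1$ and $\Char\neq 2$; also $\Char\neq 1$ as before.

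Next we compute
\[
3\times 1=\bigcup_{x\in(\HH_r')^\times}\bigl(x\boxplus 1\bigr),
\]
using the standard convention that the hypersum of a set is the union of the elementwise hypersums. Taking $x=g'$ gives $g'\boxplus 1=1\boxplus g'\cdot 1=\HH_r'$, which contains $0$. Hence $\Char=3$ and $(\Char,\cchar)=(3,1)$, which is (iii).

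The computation itself is routine. The only points needing care are:
\begin{itemize}[leftmargin=*]
\item that the set-extension convention for $(n-1)\times 1\boxplus 1$ is the union, as in \cite{KedzierskiLinziStojalowska2023};
\item that $g'\neq 1$ in $\HH_r'$, which holds because $g'$ has multiplicative order $2$.
\end{itemize}
Finally, because $\HH_r$ has characteristic $2$ and $\HH_r'$ has characteristic $3$, the two classes cannot be isomorphic. This gives the separation claim.
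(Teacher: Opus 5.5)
Your proof is correct and matches the paper's argument step for step. You reduce via Theorem~\ref{thm:BJ} to the two explicit models and read off $\Char$ and $\cchar$ from $1\boxplus 1$ (which is $\HH_r$, resp.\ $(\HH_r')^\times$); you then get $\Char\HH_r'=3$ from $g'\in 1\boxplus 1$ together with $0\in g'\boxplus 1$. The points you flag explicitly, namely isomorphism invariance of $\Char$ and $\cchar$ and the union convention for $(n-1)\times 1\boxplus 1$, are left implicit in the paper but are exactly what its proof uses.
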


\begin{proof}
By Theorem~\ref{thm:BJ}, $K$ is isomorphic to $\HH_r$ in cases (i)--(ii) and to
$\HH_r'$ in case (iii).
It is therefore enough to evaluate $\Char$ and $\cchar$ on these two models.
Write $[1]$ for the multiplicative identity.

\paragraph{Type $\HH_r$.}
The rules give $[1]\boxplus[1]=\HH_r$ (since $-1=[1]$ and $x\boxplus x$ is full for
$x\neq 0$).
Hence $0\in [1]\boxplus[1]=2\times[1]$, so $\Char \HH_r=2$, and
$1\in [1]\boxplus[1]$, so $\cchar \HH_r=1$.

\paragraph{Type $\HH_r'$.}
Here $-1=g'\neq 1$ (order-$2$ element).
For the sum $[1]\boxplus[1]$ we have $1\neq g'\cdot 1$, so the third rule applies:
$[1]\boxplus[1]=(\HH_r')^\times$, the set of all nonzero elements.
Thus $0\notin 2\times[1]$, so $\Char \HH_r'>2$, while $1\in[1]\boxplus[1]$, so
$\cchar \HH_r'=1$.

It remains to show $\Char \HH_r'=3$.
Consider $3\times[1]=\bigl([1]\boxplus[1]\bigr)\boxplus[1]
=(\HH_r')^\times\boxplus[1]$.
For each nonzero $a$, the rules give $0\in a\boxplus[1]$ if and only if
$a=-1=g'$ (equivalently $a\boxplus[1]$ is the full set $\HH_r'$ when
$a=g'$).
Since $g'\in(\HH_r')^\times=[1]\boxplus[1]$, we obtain $0\in 3\times[1]$.
Therefore $\Char \HH_r'=3$.
\end{proof}

\begin{remark}
Theorem~\ref{thm:stable-char} explains the experimental pattern observed for
$2\le r\le 8$ on all large prime powers in our scans (Table~\ref{tab:inv-check}):
the C-characteristic of every large finite-field quotient is $1$, and the
characteristic is a complete invariant of the Baker--Jin class when $r$ is even.
\end{remark}

\begin{corollary}
\label{cor:sep}
For even $r\ge 2$ and $q,q'\ge N_r$ with $r\mid(q-1)$ and $r\mid(q'-1)$,
\[
\FF_q/G_r\;\cong\;\FF_{q'}/G_r
\quad\Longleftrightarrow\quad
\Char(\FF_q/G_r)=\Char(\FF_{q'}/G_r),
\]
and both sides are equivalent to $q\equiv q'\pmod{2r}$.
\end{corollary}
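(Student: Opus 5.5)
The plan is to chain Theorem~\ref{thm:BJ} with Theorem~\ref{thm:stable-char}. Fix even $r\ge 2$ and prime powers $q,q'\ge N_r$ with $r\mid(q-1)$ and $r\mid(q'-1)$. Since $q\equiv 1\pmod r$, the residue of $q$ modulo $2r$ is either $1$ or $r+1$, and the same holds for $q'$. So each of $q,q'$ lies in exactly one of the two residue classes treated in Theorem~\ref{thm:BJ}(2). I will show that all three conditions in the statement are equivalent to ``$q$ and $q'$ lie in the same one of these two classes.''

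\textbf{Congruence implies isomorphism.} Suppose $q\equiv q'\pmod{2r}$. Then both residues are $1$ or both are $r+1$. By Theorem~\ref{thm:BJ}(2), both quotients are then isomorphic to $\HH_r$, or both to $\HH_r'$, so $\FF_q/G_r\cong\FF_{q'}/G_r$.

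\textbf{Isomorphism implies equal characteristic.} The invariant $\Char$ is preserved by hyperfield isomorphisms. An isomorphism sends $1$ to $1$ and $0$ to $0$, and it commutes with hyperaddition, so it maps each $n\times_F 1$ onto $n\times_{F'}1$. Hence the membership condition $0\in n\times_F 1$ is transported, and isomorphic quotients have the same characteristic.

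\textbf{Equal characteristic implies congruence.} This is the only substantive step, and it rests entirely on Theorem~\ref{thm:stable-char}(ii)--(iii). That result gives $\Char=2$ when the residue is $1$ and $\Char=3$ when it is $r+1$. Suppose $q\not\equiv q'\pmod{2r}$. Then one of $q,q'$ is $\equiv 1$ and the other is $\equiv r+1$ modulo $2r$, so their characteristics are $2$ and $3$ and in particular differ. Taking the contrapositive, equal characteristics force $q\equiv q'\pmod{2r}$. This closes the cycle of implications.

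The only point needing care is the initial observation that $r\mid(q-1)$ leaves only the two residues $1$ and $r+1$ modulo $2r$. This holds because $q-1=kr$, and $kr$ modulo $2r$ is $0$ or $r$ according to the parity of $k$. The obstacle is therefore mild, since everything else is an invocation of earlier results.
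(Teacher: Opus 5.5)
Your proof is correct and matches the paper's approach. The paper states the corollary without proof, as an immediate consequence of Theorem~\ref{thm:BJ}(2) and Theorem~\ref{thm:stable-char}(ii)--(iii). Your cycle congruence $\Rightarrow$ isomorphism $\Rightarrow$ equal $\Char$ $\Rightarrow$ congruence fills in exactly that argument, including the two small points it needs: $q \bmod 2r\in\{1,r+1\}$, and $\Char$ is an isomorphism invariant.
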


%----------------------------------------------------------------------
\section{Computational framework}
\label{sec:software}
%----------------------------------------------------------------------

We implemented an open-source library for finite-field Krasner quotients
\cite{QuotientHyperfieldsSoftware}.
The release used for the tables in this article is version~\texttt{0.1.0},
available at
\url{https://github.com/linzialessandro/Quotients-of-Finite-Fields-Optimized}.
A frozen source snapshot and the CSV tables under \texttt{paper/tables/}
are provided as arXiv ancillary material.

Elements of $K=\FF_q/G_r$ are labeled by discrete logarithms in $\ZZ/r\ZZ$
(with a sentinel for $0$).
Core capabilities include:
\begin{itemize}[leftmargin=*]
\item construction via \texttt{galois} finite fields;
\item hyperaddition with caching;
\item layered isomorphism:
  Baker--Jin $O(1)$ test (with Remark~1.2 threshold),
  Aut$(C_r)$-normalized comparison of $1\boxplus x$ tables as gold standard,
  and an automatic policy;
\item $\Char$ and $\cchar$ as in Definition~\ref{def:char};
\item structure fingerprints for census/classification;
\item paper criteria checks (Massouros sum bounds; Linzi characteristic bounds).
\end{itemize}
Experiments are orchestrated by the command-line tools
\texttt{qh-open-questions}, \texttt{qh-invariants}, and \texttt{qh-papers}.
Source code and reproduction scripts accompany this article as arXiv
ancillary files (and as a public repository).

\subsection{Empirical isomorphism threshold}

Following Baker--Jin open question~(3), define $N_r^{\mathrm{emp}}$ as one plus
the largest prime power $q$ for which $\FF_q/G_r$ is \emph{not} isomorphic to
the stable class of its Baker--Jin residue, among prime powers up to a limit past
the Remark~1.2 bound (so that the stable fingerprint is justified by
Theorem~\ref{thm:BJ}).

\begin{table}[ht]
\centering
\caption{Isomorphism thresholds for $2\le r\le 8$.
$N_r^{\mathrm{emp}}$ is empirical; $N_r^{(1.2)}$ is Baker--Jin Remark~1.2;
$r^4$ is the coarse bound; the lower bound is $(r-1)^2+1$ when $r-1$ is prime.}
\label{tab:nr}
\begin{tabular}{@{}rrrrrr@{}}
\toprule
$r$ & $N_r^{\mathrm{emp}}$ & $N_r^{(1.2)}$ & $r^4$ & lower & \#exceptions \\
\midrule
2 & 6 & 6 & 16 & --- & 2 \\
3 & 17 & 17 & 81 & 5 & 4 \\
4 & 42 & 56 & 256 & 10 & 7 \\
5 & 102 & 171 & 625 & --- & 7 \\
6 & 278 & 434 & 1296 & 26 & 18 \\
7 & 492 & 940 & 2401 & --- & 8 \\
8 & 762 & 1810 & 4096 & 50 & 18 \\
\bottomrule
\end{tabular}
\end{table}

\begin{proposition}[Computational]
\label{prop:nr-emp}
For $2\le r\le 8$, the values in Table~\ref{tab:nr} hold under the experimental
protocol of Section~\ref{sec:software}.
In particular, Remark~1.2 is sharp for $r=2,3$ and strictly not sharp for
$4\le r\le 8$.
\end{proposition}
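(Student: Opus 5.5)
The proposition is computational, so the proof is a verification protocol rather than a deduction. For each $2\le r\le 8$ I would enumerate all prime powers $q\equiv 1\pmod r$ up to a cutoff $L_r$ that lies strictly above the Remark~1.2 bound $N_r^{(1.2)}$, for instance $L_r=N_r^{(1.2)}+2r$. This guarantees that both residue classes modulo $2r$ occur among prime powers $q\ge N_r^{(1.2)}$ when $r$ is even.

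By Theorem~\ref{thm:BJ}, every $q\ge N_r^{(1.2)}$ gives the stable class of its residue. It therefore suffices to compute, for each $q<N_r^{(1.2)}$, whether $\FF_q/G_r$ is isomorphic to the model $\HH_r$ or $\HH_r'$ prescribed by $q\bmod 2r$.

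The isomorphism test proceeds as follows.
\begin{enumerate}
\item Fix a generator $\gamma$ of $\FF_q^\times$ and label classes by discrete logarithms modulo $r$.
\item Compute the table $T_q(x)=[1]\boxplus[x]$ for $x\in\ZZ/r\ZZ\cup\{0\}$. This table determines the hyperaddition by distributivity.
\item Compare $T_q$ with the explicit Baker--Jin rules: $T_q(1)$ must be full or $\HH^\times$ according to type, and every other nonzero entry must be $\HH^\times$.
\end{enumerate}
Because the stable models are invariant under $\mathrm{Aut}(C_r)$ (their rules are phrased via $-1$ and are otherwise uniform), no normalization search is needed. It suffices to check that every sum $[1]\boxplus[x]$ with $[x]\ne -[1]$ omits $0$ and contains all of $\HH^\times$, and that $[1]\boxplus(-[1])$ is full.

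The failures $q$ are exactly the prime powers where some coset $x+G_r$ misses a coset class. Recording the largest such $q$ gives $N_r^{\mathrm{emp}}=q_{\max}+1$, and counting the failures gives the exceptions column.

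For the sharpness claim, I would compare $N_r^{\mathrm{emp}}$ with $N_r^{(1.2)}$ directly. For $r=2,3$, equality means the prime power immediately below the Weil bound is itself exceptional. For example, for $r=3$ the failure $q=16$ shows that $17$ cannot be lowered. For $4\le r\le 8$, the strict gaps shown in the table follow. The lower-bound column would be confirmed by exhibiting the exceptional $q=(r-1)^2$: here $G_r$ is $\FF_{r-1}^\times$-related and degenerate, so the sum sets are too small.

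The main obstacle is correctness of the finite computation rather than any theory, in particular computing the coset sum sets exactly for $q$ up to about $1810$. I would compute each $[1]\boxplus[x]$ by iterating $g\in G_r$ and taking discrete logarithms via a precomputed log table, which costs $O(q)$ per $q$. I would cross-check the result with the independent fingerprint test and with the characteristic invariants of Theorem~\ref{thm:stable-char} as a sanity check.
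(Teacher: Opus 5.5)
This is essentially the paper's protocol from Section~\ref{sec:software}. Theorem~\ref{thm:BJ}, with the Remark~1.2 bound, settles every $q\ge N_r^{(1.2)}$, and an exhaustive scan below that bound yields $N_r^{\mathrm{emp}}$, the exception counts and the sharpness comparison. Your change of isomorphism test is also fine: the paper compares $\mathrm{Aut}(C_r)$-normalized tables, whereas you check directly that $[1]\boxplus[x]\supseteq K^\times$ for every nonzero $x$. That is legitimate because the Baker--Jin rules are invariant under automorphisms of $C_r$.

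There are two slips, neither of which affects the argument:
\begin{itemize}
\item As written, step~3 demands $[1]\boxplus g'=K^\times$ for type $\HH_r'$. That entry must in fact be full, as your next paragraph correctly says.
\item The cutoff $N_r^{(1.2)}+2r$ does not guarantee prime powers in both classes modulo $2r$. For $r=4$, the only admissible prime power in $[56,64]$ is $61\equiv 5\pmod 8$. Your direct test never needs any $q\ge N_r^{(1.2)}$, so nothing is lost.
\end{itemize}
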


%----------------------------------------------------------------------
\section{Censuses and comparison with enumerations}
\label{sec:census}
%----------------------------------------------------------------------

Let $Q_r^{\mathrm{fin}}$ denote the number of isomorphism classes of hyperfields
of order $r+1$ that arise as $\FF_q/G_r$ for some prime power $q$
(necessarily only finitely many classes, by Theorem~\ref{thm:BJ}).
Let $H_n$ be the number of isomorphism classes of \emph{all} hyperfields of
order $n$, as tabulated by Ameri et al.~\cite{Ameri2020} for $n\le 6$ and by
Massouros--Massouros~\cite{Massouros2025} for $n=7$ ($H_7=277$).

\begin{table}[ht]
\centering
\caption{Finite-field quotient classes versus all hyperfields.}
\label{tab:qh}
\begin{tabular}{@{}rrrrl@{}}
\toprule
$n$ & $r$ & $H_n$ & $Q_r^{\mathrm{fin}}$ & $Q^{\mathrm{fin}}/H$ \\
\midrule
2 & 1 & 2 & 2 & $1.00$ \\
3 & 2 & 5 & 4 & $0.80$ \\
4 & 3 & 7 & 4 & $0.57$ \\
5 & 4 & 27 & 9 & $0.33$ \\
6 & 5 & 16 & 7 & $0.44$ \\
7 & 6 & 277 & 15 & $0.054$ \\
\bottomrule
\end{tabular}
\end{table}

\begin{proposition}[Computational]
\label{prop:qfin}
Under complete scans past the Remark~1.2 bound,
\begin{enumerate}[label=(\roman*)]
\item $Q_2^{\mathrm{fin}}=4$, $Q_3^{\mathrm{fin}}=4$, $Q_4^{\mathrm{fin}}=9$,
$Q_5^{\mathrm{fin}}=7$, $Q_6^{\mathrm{fin}}=15$;
\item of the $15$ finite-field quotient types of order~$7$, exactly two are
Baker--Jin stable and thirteen are sporadic (small $q$);
\item known Massouros identifications such as
$\ZZ/97\ZZ\,G\cong \ZZ/157\ZZ\,G$ are recovered as a single fingerprint class.
\end{enumerate}
\end{proposition}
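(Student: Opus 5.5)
This proposition is computational: a plan for it is a plan for an exhaustive, certified enumeration. I split it into a finite part and an infinite tail, and handle the tail with Theorem~\ref{thm:BJ}. Fix $r\in\{2,\dots,6\}$ and let $N_r^{(1.2)}$ be the Remark~1.2 bound from Table~\ref{tab:nr}. For every prime power $q\ge N_r^{(1.2)}$ with $r\mid(q-1)$, Theorem~\ref{thm:BJ} places $\FF_q/G_r$ in $\HH_r$ or $\HH_r'$. For $r$ odd only $\HH_r$ occurs. For $r$ even both classes occur, since Dirichlet's theorem supplies primes in each residue class $1$ and $r+1$ modulo $2r$ beyond any bound. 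So the tail contributes exactly one class for odd $r$ and two for even $r$. Theorem~\ref{thm:stable-char} shows these two are non-isomorphic, because their characteristics are $2$ and $3$.

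\textbf{Finite part.} I would enumerate all prime powers $q<N_r^{(1.2)}$ with $r\mid (q-1)$ and build each $\FF_q/G_r$ explicitly. Elements are labelled by discrete logarithms modulo $r$. By distributivity the whole hyperaddition is determined by the sets $[1]\boxplus[x]$, so I record the table $x\mapsto [1]\boxplus[x]$.

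\textbf{Isomorphism testing.} An isomorphism of such quotients restricts to a group isomorphism of $C_r$. So two quotients are isomorphic if and only if their $[1]\boxplus x$ tables agree after applying some element of $\mathrm{Aut}(C_r)\cong(\ZZ/r\ZZ)^\times$. I would therefore canonicalize each table by taking the lexicographically least image over $(\ZZ/r\ZZ)^\times$, and count distinct canonical forms. The union of the small-$q$ classes with the stable classes gives $Q_r^{\mathrm{fin}}$. For $r=1$ the value $2$ is immediate: $\FF_2$ and the Krasner hyperfield both occur, the latter via $\FF_q/\FF_q^\times$ with $q>2$.

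\textbf{Parts (ii) and (iii).} For $r=6$, part (ii) follows by checking which of the $15$ canonical forms coincide with the stable fingerprints of $\HH_6$ and $\HH_6'$. Exactly two should, and the remaining thirteen occur only for small $q$. Part (iii) is the observation that $q=97$ and $q=157$ (both $\equiv 1\pmod 6$) yield the same canonical table.

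\textbf{Main obstacle.} The hard part is the justification rather than the computation. The enumeration is only complete if the BJ threshold is applied correctly. Every $q$ below the bound must actually be scanned, including prime powers that are not primes, such as $q=25,49,64,121,\dots$. I would also cross-check the stable fingerprints directly against the explicit models of $\HH_r$ and $\HH_r'$ at some $q$ above the bound. This guards against an implementation error that would merge a sporadic class into a stable one.
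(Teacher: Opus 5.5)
Your plan is correct and is essentially the paper's own protocol. The paper constructs $\FF_q/G_r$ for every prime power below the Remark~1.2 bound, invokes Theorem~\ref{thm:BJ} for the tail, uses the $\mathrm{Aut}(C_r)$-normalized comparison of the $[1]\boxplus[x]$ tables as the complete isomorphism test, and reads parts (ii) and (iii) off the resulting classes. Your explicit appeal to Dirichlet's theorem (both stable classes are actually realized) and to Theorem~\ref{thm:stable-char} (they are distinct) makes rigorous what the paper leaves implicit in its scans past the bound.
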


\begin{remark}[Toward Baker--Jin open question~(4)]
Table~\ref{tab:qh} supports the philosophy that quotients are rare among all
hyperfields: already at order~$7$, finite-field quotients account for only about
$5.4\%$ of isomorphism types.
The full quotient count $Q_r$ may be larger if infinite-field quotients add
classes (e.g.\ the sign hyperfield at order~$3$), so $Q_r^{\mathrm{fin}}/H_{r+1}$
is a lower bound on the finite-field contribution to the quotient share.
\end{remark}

\begin{table}[ht]
\centering
\caption{Verification of Theorem~\ref{thm:stable-char} on large prime powers
($q\ge N_r^{(1.2)}$, scan caps as in the software suite).}
\label{tab:inv-check}
\begin{tabular}{@{}rrrr@{}}
\toprule
$r$ & \# large $q$ tested & matches & status \\
\midrule
2 & 17 & 17 & pass \\
3 & 9 & 9 & pass \\
4 & 6 & 6 & pass \\
5 & 6 & 6 & pass \\
6 & 18 & 18 & pass \\
7 & 10 & 10 & pass \\
8 & 31 & 31 & pass \\
\bottomrule
\end{tabular}
\end{table}

%----------------------------------------------------------------------
\section{Validation of structural criteria}
\label{sec:criteria}
%----------------------------------------------------------------------

On representative quotients we verified:
\begin{itemize}[leftmargin=*]
\item Massouros--Massouros~\cite[Prop.~1]{Massouros2025}: $|x\boxplus y|\le |G|$
for all $x,y$;
\item Linzi et al.~\cite[Prop.~7, Cor.~1, Prop.~9]{KedzierskiLinziStojalowska2023}:
characteristic bounds in terms of divisors of $|G|$, and the equivalence
$\Char=2\Leftrightarrow |G|$ even when the underlying field has odd characteristic.
\end{itemize}
All tested samples passed; these checks serve as regressions for the implementation
and as independent confirmation of the invariant computations used above.

%----------------------------------------------------------------------
\section{Discussion and open problems}
\label{sec:open}
%----------------------------------------------------------------------

\begin{enumerate}[leftmargin=*,itemsep=4pt]
\item \textbf{Growth of $N_r$.}
  Table~\ref{tab:nr} suggests $N_r^{\mathrm{emp}}$ grows much more slowly than
  $r^4$ and, for $r\ge 4$, strictly more slowly than the Remark~1.2 Weil bound.
  Determining the true order of $N_r$ remains open~\cite{BakerJin2021}.
\item \textbf{Sporadic classification.}
  For each fixed $r$, only finitely many $\FF_q/G_r$ are non-stable.
  A complete table of sporadic types (with invariants) for moderate $r$ is a
  natural sequel.
\item \textbf{Order $5$ in full.}
  Baker--Jin open question~(1) asks for all hyperfields of order~$5$ and which
  are field quotients.
  We determine the finite-field quotient part ($Q_4^{\mathrm{fin}}=9$);
  matching against a full abstract enumeration remains.
\item \textbf{Infinite-field quotients.}
  Separating quotients of infinite fields among finite hyperfields still lacks a
  practical algorithm~\cite{BakerJin2021}.
\item \textbf{Asymptotics of $Q_r/H_r$.}
  Our $Q_6^{\mathrm{fin}}/277\approx 5.4\%$ is compatible with
  $Q_r/H_r\to 0$, but a proof requires control of $H_r$.
\end{enumerate}

%----------------------------------------------------------------------
\section{Conclusion}
%----------------------------------------------------------------------

Finite-field Krasner quotients of large order are rigidly constrained:
Baker--Jin limit the isomorphism type, and Theorem~\ref{thm:stable-char}
fixes their characteristic and C-characteristic.
Computationally, we mapped thresholds, sporadics, and global counts up to
order~$7$, connecting structure theory~\cite{BakerJin2021}, invariants
\cite{KedzierskiLinziStojalowska2023}, and enumerations
\cite{Ameri2020,Massouros2025}.
The accompanying software makes the tables and checks fully reproducible.

\section*{Acknowledgements}
The computational experiments use the open-source package described in
Appendix~\ref{app:software} and cited as~\cite{QuotientHyperfieldsSoftware}.

\appendix
\section{Software and reproducibility}
\label{app:software}

\paragraph{Library layout (essentials).}
\begin{verbatim}
quotient_hyperfields/
  hyperfield.py      # QuotientHyperfield
  isomorphism.py     # Baker-Jin / general / auto
  experiments.py     # empirical N_r, Q_fin
  invariants_experiments.py
  papers_experiments.py
  criteria.py        # Massouros / Linzi checks
  atlas.py
\end{verbatim}

\paragraph{Reproduce tables.}
\begin{verbatim}
pip install -e ".[dev]"
qh-papers          # tracks 1-5
qh-open-questions  # N_r and Q_fin
qh-invariants      # char / C-char probes
\end{verbatim}

\paragraph{Ancillary data.}
CSV tables used in this article and a frozen software snapshot are provided
as arXiv ancillary files (not as a requirement to browse the public code
repository). Literature counts were checked against Ameri et al.\ Table~1
($H_n$ for $n=2,\ldots,6$ equal to $2,5,7,27,16$) and
Massouros--Massouros ($H_7=277$).


\begin{thebibliography}{99}

\bibitem{Ameri2020}
R.~Ameri, M.~Eyvazi, and S.~Ho\v{s}kov\'a-Mayerov\'a,
\textit{Advanced results in enumeration of hyperfields},
AIMS Mathematics \textbf{5} (2020), 6552--6579.
DOI: \url{https://doi.org/10.3934/math.2020422}.

\bibitem{BakerJin2021}
M.~Baker and T.~Jin,
\textit{On the structure of hyperfields obtained as quotients of fields},
Proc.\ Amer.\ Math.\ Soc.\ \textbf{149} (2021), 63--70.

\bibitem{BergelsonShapiro1992}
V.~Bergelson and D.~B.~Shapiro,
\textit{Multiplicative subgroups of finite index in a ring},
Proc.\ Amer.\ Math.\ Soc.\ \textbf{116} (1992), 885--896.

\bibitem{KedzierskiLinziStojalowska2023}
D.E.~K\k{e}dzierski, A.~Linzi, and H.~Stoja\l{}owska,
\textit{Characteristic, C-characteristic and positive cones in hyperfields},
Mathematics \textbf{11} (2023), no.~3, 779.
DOI: \url{https://doi.org/10.3390/math11030779}.

\bibitem{Krasner1957}
M.~Krasner,
\textit{Approximation des corps valu\'es complets de caract\'eristique $p\neq 0$
par ceux de caract\'eristique z\'ero},
Colloque d'alg\`ebre sup\'erieure (Bruxelles, 1956),
Centre Belge de Recherches Math\'ematiques, 1957, pp.~129--206.

\bibitem{Krasner1983}
M.~Krasner,
\textit{A class of hyperrings and hyperfields},
Internat.\ J.\ Math.\ Math.\ Sci.\ \textbf{6} (1983), 307--311.

\bibitem{Massouros1985}
C.G.~Massouros,
\textit{Methods of constructing hyperfields},
Internat.\ J.\ Math.\ Math.\ Sci.\ \textbf{8} (1985), 725--728.

\bibitem{Massouros2025}
C.G.~Massouros and G.G.~Massouros,
\textit{On the borderline of fields and hyperfields, part~II --
enumeration and classification of the hyperfields of order~$7$},
AIMS Mathematics \textbf{10} (2025), 21287--21421.
DOI: \url{https://doi.org/10.3934/math.2025951}.

\bibitem{QuotientHyperfieldsSoftware}
A.~Linzi,
\textit{quotient-hyperfields}: research library for Krasner quotients of finite
fields (version~0.1.0), 2026.
\url{https://github.com/linzialessandro/Quotients-of-Finite-Fields-Optimized}.

\bibitem{Turnwald1994}
G.~Turnwald,
\textit{Multiplicative subgroups of finite index in a division ring},
Proc.\ Amer.\ Math.\ Soc.\ \textbf{120} (1994), 377--381.

\end{thebibliography}
\end{document}